\documentclass{amsart}

\usepackage{amsfonts,amssymb,amsmath,enumerate,verbatim,mathtools,tikz,bm,mathrsfs,tikz-cd,hyperref,comment,stmaryrd,amsthm}
\usepackage{colonequals}
\usepackage{adjustbox}
\usepackage[all,pdf]{xy}
\hypersetup{
    colorlinks=true,
    linkcolor=blue,
    filecolor=magenta,      
    urlcolor=cyan,
}
\usepackage{cleveref}

\crefname{lemma}{lemma}{lemmas}
\Crefname{lemma}{Lemma}{Lemmas}
\crefname{proposition}{proposition}{propositions}
\Crefname{proposition}{Proposition}{Propositions}
\crefname{corollary}{corollary}{corollaries}
\Crefname{corollary}{Corollary}{Corollaries}
\crefname{remark}{remark}{remarks}
\Crefname{remark}{Remark}{Remarks}

\usepackage{adjustbox}
\usepackage{leftindex}

\makeatletter
\@namedef{subjclassname@2020}{\textup{2020} Mathematics Subject Classification}
\makeatother

\author[Jian Liu]{Jian Liu}
\address{School of Mathematics and Statistics, and Hubei Key Laboratory of Mathematical Sciences,  Central China Normal University,  Wuhan 430079, P.R. China}
\email{jianliu@ccnu.edu.cn}

\keywords{dominant local ring, descent, singularity category, homotopy category, acyclic complex, injective module}
\subjclass[2020]{18G80 (primary); 13C11, 13D09 (secondary)}

\DeclareMathOperator{\rank}{rank}

\DeclareMathOperator{\embdim}{embdim}

\DeclareMathOperator{\h}{H}

\newcommand{\n}{\mathfrak{n}}

\newcommand{\Z}{\mathbb{Z}}

\newcommand{\T}{\mathsf{T}}

\newcommand{\D}{\mathsf{D}}
\newcommand{\K}{\mathsf{K}}
\newcommand{\C}{\mathsf{C}}

\pgfdeclarelayer{bg}    
\pgfsetlayers{bg,main}

\newcommand{\del}{\partial}

\newcommand{\m}{\mathfrak{m}}

\DeclareMathOperator{\pd}{pd}

\DeclareMathOperator{\ac}{ac}

\DeclareMathOperator{\Hom}{Hom}

\DeclareMathOperator{\Ext}{Ext}

\DeclareMathOperator{\thick}{\mathsf{thick}}

\DeclareMathOperator{\Tor}{Tor}

\DeclareMathOperator{\codim}{codim}

\newcommand{\per}{\mathsf{perf}}
\newcommand{\sg}{\mathsf{sg}}

\newcommand{\Mod}{\mathsf{Mod}}
\newcommand{\Inj}{\mathsf{Inj}}
\newcommand{\mo}{\mathsf{mod}}

\newtheorem{theorem}{Theorem}[section]

\newtheorem*{Question}{Question}

\newtheorem{proposition}[theorem]{Proposition}

\newtheorem{lemma}[theorem]{Lemma}
\newtheorem{corollary}[theorem]{Corollary}

\theoremstyle{definition}
\newtheorem{example}[theorem]{Example}
\newtheorem{remark}[theorem]{Remark}

\newtheorem{chunk}[theorem]{}

\usepackage[utf8]{inputenc}

\newtheorem*{ack}{Acknowledgements}

\title[On Takahashi's descent question]{On Takahashi's descent question about dominant local rings}
\date{\today}

\begin{document}

\begin{abstract}
This article studies dominant local rings, a notion introduced by Takahashi. We prove that, for a flat local homomorphism $R\longrightarrow S$ between commutative noetherian local rings, dominance descends from $S$ to $R$, thereby answering Takahashi's descent question affirmatively. The proof makes use of Krause's realization of the idempotent completion
of the singularity category as the subcategory of compact objects in the
homotopy category of acyclic complexes of injective modules.
\end{abstract}
\maketitle

\section{Introduction}

Let $R$ be a commutative noetherian local ring. In \cite{Takahashi:2023}, Takahashi introduced the notion of \emph{dominant local rings}. By definition, $R$ is dominant if the residue field of $R$ belongs to every nonzero thick subcategory of the singularity category of $R$; see \Cref{dominant}. Dominant local rings enjoy several desirable homological properties, including being $\Tor$-friendly.  In \cite{Takahashi:2023}, Takahashi verified that several important classes of local rings are dominant. These include hypersurfaces, Cohen--Macaulay local rings with infinite residue fields and minimal multiplicity, local rings with quasi-decomposable maximal ideals introduced in \cite{NT}, and Burch rings introduced in \cite{DKT2020}. 

The classification of thick subcategories is a fundamental topic in algebra and representation theory. The classification of thick subcategories of the category of perfect complexes over a commutative noetherian ring dates back to the work of Hopkins \cite{Hopkins} and Neeman \cite{Neeman92-DR}. In \cite{Takahashi:2010}, Takahashi classified the thick subcategories of the singularity category of a hypersurface. A generalization of this result to complete intersections was established by Stevenson \cite{Stevenson}.  Recently, in \cite{Takahashi:2023}, Takahashi classified the thick subcategories of the singularity category of a local ring whose certain localizations are dominant local rings. This result highlights the importance of dominant local rings. 

In \cite{Takahashi:2026}, Takahashi introduced and studied the notion of uniformly dominant local rings. Kobayashi and Takahashi \cite{KT2026} further developed this theory. More recently, Kimura, Mifune, Otake, and Takahashi \cite{KMOT} studied the notion of quasi-dominant local rings introduced in \cite{Takahashi:2023} and showed that it is closely related to local rings over which every Gorenstein projective module is projective.

Flat local homomorphisms are among the most important morphisms in commutative algebra. Many important properties of local rings are known to descend along flat local homomorphisms. In \cite[Question 7.5]{Takahashi:2023}, Takahashi raised the following natural question:

\begin{Question}
    Let $R\longrightarrow S$ be a flat local homomorphism between commutative noetherian local rings. If $S$ is dominant, is $R$ dominant? 
\end{Question}

Although dominance is defined in terms of thick generation in the
singularity category, it is not clear how to descend such generation
along restriction of scalars. We overcome this difficulty by passing
to the homotopy category of acyclic complexes of injective modules,
whose subcategory of compact objects is equivalent to the idempotent
completion of the singularity category by the work of Krause
\cite{Krause2005}. In this setting, restriction of scalars preserves
coproducts, and the stabilization of the residue field can be
controlled. Our main result, \Cref{T1}, answers Takahashi's question
affirmatively.

\begin{theorem}\label{T1}
(See \ref{extension}) Let $R\longrightarrow S$ be a flat local homomorphism between commutative noetherian local rings. If $S$ is dominant, then $R$ is dominant. 
\end{theorem}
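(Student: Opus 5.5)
We take $M\in \D_\sg(R)$ nonzero with $k\notin\thick M$ and aim for a contradiction. Equivalently, writing $\mathcal{T}=\thick_{\D_\sg(R)} M$, we must show $k$ lies in $\mathcal{T}$ whenever $M\neq 0$. The argument has three parts: base change $M$ to $S$, use dominance of $S$ there, and bring the resulting generation back to $R$ inside $\K_{\ac}(\Inj R)$.

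\textbf{Base change and dominance of $S$.} Since $\varphi\colon R\to S$ is flat, $S\otimes_R-$ induces a triangle functor $\D_\sg(R)\to\D_\sg(S)$. It is faithful on objects: a finitely generated $R$-module $N$ has finite projective dimension over $R$ if and only if $S\otimes_R N$ does over $S$, by faithful flatness and the fact that Betti numbers are preserved. Hence $S\otimes_R M\neq0$ in $\D_\sg(S)$. Dominance of $S$ then gives $\ell=S/\n\in\thick_{\D_\sg(S)}(S\otimes_R M)$. Next, $S\otimes_R k=S/\m S$ is a finite length $S$-module, so it lies in $\thick_{\D_\mathsf{b}(S)}(\ell)$ and hence in $\thick_{\D_\sg(S)}(S\otimes_R M)$.

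\textbf{Restriction back to $R$ via $\K_{\ac}(\Inj)$.} Restriction of scalars does not preserve finitely generated modules, so we pass to Krause's category, where $\D_\sg(R)^{\natural}\simeq \K_{\ac}(\Inj R)^c$ via $N\mapsto \mathbf{t}N$ (the stabilization: cone of $\mathbf{i}N\to\mathbf{j}N$, or the acyclic piece of a complete injective resolution). Since $S$ is flat and $R$ is noetherian, restriction sends injective $S$-modules to injective $R$-modules, so it gives a coproduct preserving triangle functor $\varphi_*\colon \K_{\ac}(\Inj S)\to \K_{\ac}(\Inj R)$. From the previous step, $\mathbf{t}_S(S/\m S)$ lies in $\thick(\mathbf{t}_S(S\otimes_R M))$, and therefore $\varphi_*\mathbf{t}_S(S/\m S)\in\thick(\varphi_*\mathbf{t}_S(S\otimes_R M))$.

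We then need two identifications.
\begin{enumerate}[(a)]
\item $\varphi_*\mathbf{t}_S(S\otimes_R M)$ lies in $\mathrm{Loc}(\mathbf{t}_R M)$. The plan here is to use $\mathbf{i}_S(S\otimes_R M)\simeq S\otimes_R\mathbf{i}_R M$ up to injective replacement, with $S$ a filtered colimit (direct limit) of finite free $R$-modules by Lazard. The functor $S\otimes_R-$ applied to acyclic complexes of injectives over a noetherian ring is compatible with homotopy colimits, so $\varphi_*\mathbf{t}_S(S\otimes M)\simeq S\otimes_R \mathbf{t}_R M$ is a homotopy colimit of copies of $\mathbf{t}_R M$.
\item $\mathbf{t}_R k$ is a direct summand of, or is built from, $\varphi_*\mathbf{t}_S(S/\m S)\simeq S\otimes_R\mathbf{t}_R k$. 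Since $k\to S/\m S$ is split as a $k$-vector space map ($S/\m S$ is a nonzero $k$-vector space, i.e.\ free over $k$), $S\otimes_R \mathbf{t}_Rk\simeq (S/\m S)\otimes_k\mathbf{t}_Rk$ is a coproduct of copies of $\mathbf{t}_R k$. Thus $\mathbf{t}_Rk$ is a summand.
\end{enumerate}
Together these give $\mathbf{t}_R k\in\mathrm{Loc}(\mathbf{t}_RM)$. Since both objects are compact, Neeman's theorem yields $\mathbf{t}_Rk\in\thick(\mathbf{t}_RM)$ in $\K_{\ac}(\Inj R)^c$. Intersecting with $\D_\sg(R)$, which is dense and closed under summands of objects it contains, gives $k\in\thick M$ in $\D_\sg(R)$ (after possibly adding $k[1]$, via Thomason's theorem; but $k$ itself already lies in $\D_\sg(R)$, so the summand closure suffices).

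The main obstacle is (a), namely making precise that $S\otimes_R-$ (equivalently restriction) on $\K(\Inj)$ carries $\mathbf{t}_R M$ to $\mathbf{t}_S(S\otimes_R M)$ and lands in the localizing subcategory generated by $\mathbf{t}_RM$. We would first try to prove this via Lazard's theorem together with the fact that filtered colimits of injectives are injective over noetherian rings, so that $\mathbf{t}$ commutes with them up to homotopy colimits.
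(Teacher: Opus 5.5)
There is a genuine gap, and it begins in your first step. You assert that $S\otimes_R k=S/\m S$ has finite length over $S$, and hence lies in $\thick(\ell)$. That holds only when $\dim S=\dim R$. For instance, for $S=R[x]_{(\m,x)}$ the closed fibre has positive dimension. Whenever the closed fibre has positive dimension and $R$ is singular, $S/\m S\notin\thick_{\D_{\sg}(S)}(\ell)$. Indeed, at a non-maximal prime $\mathfrak q\supseteq\m S$ one has $\ell_{\mathfrak q}=0$, while $(S/\m S)_{\mathfrak q}=S_{\mathfrak q}\otimes_R k$ has infinite projective dimension over $S_{\mathfrak q}$. Worse, by Letz's criterion (\Cref{Letz}) the statement $S\otimes_R k\in\thick_{\D_{\sg}(S)}(S\otimes_R M)$ is \emph{equivalent} to $k\in\thick_{\D_{\sg}(R)}(M)$. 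So this step already contains the whole theorem and cannot be extracted from dominance of $S$ this cheaply. The fix is to keep $\ell$ itself and use that $k$ is an $R$-module direct summand of $\ell$, as the paper does.

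That fix, however, requires the ingredient your proposal is missing: restriction commutes with stabilization, $U\sigma_S(X)\cong\sigma_R(X)$ for $X\in\D(S)$ (in your notation, $\varphi_*\mathbf t_S\cong\mathbf t_R$ on $\D(S)$).

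Your (a) and (b) instead rest on $\varphi_*\mathbf t_S(S\otimes_RN)\simeq S\otimes_R\mathbf t_RN$, which is unproved and not even well posed. The functor $S\otimes_R-$ does not preserve injectives: take $R$ self-injective Artinian and $S=R[x]_{(\m,x)}$; then $S\otimes_RR=S$ is not injective. So $S\otimes_R\mathbf t_RN$ is not an object of $\K_{\ac}(\Inj S)$, and the ``injective replacement'' over $S$ is exactly where the comparison has to be made. Similarly, $(S/\m S)\otimes_k\mathbf t_Rk$ makes no sense, since $\mathbf t_Rk$ is not a complex of $k$-vector spaces. What is true is $\sigma_R(S/\m S)\cong\sigma_R(k)^{(I)}$, because $\sigma_R$ commutes with coproducts of modules.

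The missing idea is that $\pd_RS<\infty$ (Raynaud--Gruson, Jensen, using $\dim R<\infty$). The paper uses this twice.
\begin{enumerate}[(i)]
\item Dimension shifting shows that $\Hom_R(S,E)$ is an acyclic complex of injective $S$-modules for every $E\in\K_{\ac}(\Inj R)$. By adjunction, restriction then sends the left orthogonal $\mathcal L_S$ into $\mathcal L_R$, and $K$-injectives to $K$-injectives. Uniqueness of the localization triangle gives $U\sigma_S\cong\sigma_R$.
\item Tensoring a finite projective resolution of $S$ with $M$ gives $\sigma_R(S\otimes_RM)\in\thick\bigl(\bigoplus_i\sigma_R(P_i\otimes_RM)\bigr)\subseteq\mathrm{Loc}(\sigma_RM)$. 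This replaces your Lazard argument, which would also need justification: an arbitrary, possibly uncountable, filtered colimit of complexes is not formally a homotopy colimit in $\K(\Inj R)$.
\end{enumerate}
Your closing step is fine: compactness plus Neeman, then passing from $\K_{\ac}(\Inj R)^c$ back to $\D_{\sg}(R)$.
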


As an application, we prove that every Cohen--Macaulay local ring
of minimal multiplicity is dominant, extending Takahashi's result
by removing the infinite residue field assumption; see \Cref{no-infinite}.

The first version of this article also established several special cases of Takahashi's conjecture that every Cohen--Macaulay local ring of finite CM type is dominant \cite[Conjecture 9.1]{Takahashi:2023}. Subsequently, Mifune and Tanigawa \cite{Mifune-Tanigawa} proved that every such ring is uniformly dominant, thereby resolving the conjecture in full. In view of their result, we have omitted the corresponding special
cases from the present version and focused on the descent question.

\section{Notation, terminology, and preliminaries}
Throughout, $R$ denotes a commutative noetherian ring, and all rings considered in this article are assumed to be commutative noetherian. Let $\Mod(R)$ denote the category of all $R$-modules, and let $\mo(R)$ denote its full subcategory consisting of finitely generated $R$-modules. For each $R$-module $M$, we write $\dim(M)$ for the Krull dimension of $M$.
\begin{chunk}
\textbf{Derived category.}
Let $\D(R)$ denote the derived category of $R$-modules, which is a triangulated category with shift functor $\Sigma$. For a complex $X\in \D(R)$, the shift $\Sigma(X)$ is defined by $\Sigma(X)_i=X_{i-1}$ and $\del_{\Sigma(X)}=-\del_X$.

Let $\D^f(R)$ (resp. $\D^b(R)$) denote the full subcategory of $\D(R)$ consisting of complexes $X$ such that $\h_i(X)$ is finitely generated over $R$ for every $i\in\mathbb Z$ (resp. $\h_i(X)=0$ for $|i|\gg0$), where $\h_i(X)$ is the $i$-th homology of $X$. Set $$\D^f_b(R)\colonequals \D^f(R)\cap\D_b(R).$$  Note that a complex over $R$ is in $\D^f_b(R)$ if and only if the total homology $\bigoplus_{i\in\Z}\h_i(X)$ is finitely generated over $R$. 
\end{chunk}
\begin{chunk}
\textbf{Singularity category.}
An object $X\in\D^f_b(R)$ is called \emph{perfect} if it is isomorphic in $\D(R)$ to a bounded complex of finitely generated projective $R$-modules. We write $\per(R)$ for the full subcategory of $\D^f_b(R)$ consisting of perfect complexes. This is a triangulated subcategory of $\D^f_b(R)$. The \emph{singularity category} of $R$ is the Verdier quotient
\[
\D_{\sg}(R)\colonequals \D^f_b(R)/\per(R).
\]
This construction was introduced by Buchweitz \cite[Definition 1.2.2]{Buchweitz}; see also \cite{Orlov}.
Note that $\D_{\sg}(R)=0$ if and only if every finitely generated $R$-module has finite projective dimension. 
\end{chunk}

\begin{chunk}
\textbf{Thick subcategory.}
Let $\T$ be a triangulated category. A full subcategory $\C$ of $\T$ is called \emph{thick} if $\C$ is closed under shifts, cones, and direct summands. 
For each object $X$ in $\T$, let $\thick_\T(X)$ denote the smallest thick subcategory of $\T$ containing $X$. This can be realized as the intersection of all thick subcategories of $\T$ containing $X$. 
For example, $\per(R)$ is a thick subcategory of $\D^f_b(R)$; see \cite[Lemma 1.2.1]{Buchweitz}. Moreover, $\per(R)=\thick_{\D^f_b(R)}(R)$.

Following \cite[2.2.4]{ABIM} and \cite[Section 2]{BonVD}, the subcategory $\thick_\T(X)$ admits the following inductive description. Define
$\thick_\T^0(X)=\{0\},
$
and let $\thick_\T^1(X)$ be the smallest full subcategory of $\T$ that contains $X$ and is closed under shifts, finite direct sums, and  direct summands. For $n\geq 2$, define $\thick_\T^n(X)$ to be the full subcategory consisting of those objects $Y\in\T$ for which there exists an exact triangle
\[
Y_1\longrightarrow Y\oplus Y'\longrightarrow Y_2\longrightarrow \Sigma (Y_1)
\]
in $\T$
with
$
Y_1\in\thick_\T^1(X)$ and $
Y_2\in\thick_\T^{n-1}(X)
$.
Then
\[
\thick_\T(X)=\bigcup_{n\geq 0}\thick_\T^n(X).
\]
\end{chunk}

\begin{chunk}
    \textbf{Localizing subcategory.}
    Let $\T$ be a triangulated category admitting arbitrary coproducts. 
A full subcategory of a triangulated category $\T$ is called \emph{localizing} if it is a triangulated subcategory closed under arbitrary coproducts. Given a class of objects $S$ in $\T$, the \emph{localizing subcategory generated by $S$}, denoted by $\operatorname{Loc}_{\T}(S)$, is the smallest localizing subcategory of $\T$ containing $S$. Note that any localizing subcategory of $\T$ is thick.
\end{chunk}
 
 \begin{chunk}\label{compactly gen}
     \textbf{Compactly generated triangulated categories.}
Let $\T$ be a triangulated category admitting arbitrary coproducts. An object $X$ of $\T$ is called \emph{compact} if the functor $\Hom_{\T}(X,-)$ preserves arbitrary coproducts. Equivalently, for every family of objects $Y_i(i\in I)$ in $\T$, the canonical map
$$
\bigoplus_{i\in I}\Hom_{\T}(X,Y_i)\longrightarrow  \Hom_{\T}(X,\bigoplus_{i\in I}Y_i)
$$
is an isomorphism. Denote by $\T^{c}$ the full subcategory of $\T$ consisting of all compact objects. 

The triangulated category $\T$ is said to be \emph{compactly generated} if there exists a set of compact objects $S$ such that
$
\operatorname{Loc}_\T(S)=\T.
$
In this situation, the compact objects are precisely those lying in the thick subcategory generated by $S$; that is,
$
\T^{c}=\thick_{\T}(S),
$ 
see \cite[Lemma 2.2]{Neeman92}. For example, the derived category of $R$ is compactly generated by $R$. 
 \end{chunk}

\begin{chunk}\label{Kac}
\textbf{Homotopy category of (acyclic) complexes of injective modules.} Let $\K(\Inj R)$ denote the homotopy category of complexes of injective $R$-modules. Set
$$
\K_{\ac}(\Inj R)=\{X\in\K(\Inj R)\mid \h_i(X)=0 \text{ for all }i\in\Z\}.
$$
 This is a localizing subcategory of $\K(\Inj R)$. Krause \cite[Proposition 2.3]{Krause2005} proved that $\K(\Inj R)$ is compactly generated and there is a triangle equivalence
 $$
 \text{i}\colon \D^{f}_b(R)\xrightarrow \equiv\K(\Inj R)^{c}
 $$
 induced by taking injective resolutions.
\end{chunk}

For a local ring $(R,\m,k)$, we use $\embdim(R)$ to denote the \emph{embedding dimension} of $R$, given by
$\embdim(R)\colonequals\rank_k(\m/\m^2),
$
which is the minimal number of generators of $\m$. The \emph{codimension} of $R$ is the integer
\[
\codim(R)\colonequals\rank_k(\m/\m^2)-\dim(R).
\]

\begin{chunk}
\textbf{Hilbert--Samuel multiplicity.}
Let $(R,\m)$ be a local ring. For each finitely generated $R$-module $M$, the \emph{Hilbert--Samuel multiplicity} of $M$ with respect to $\m$ is defined by
\[
{\rm e}(M)\colonequals \lim_{n\to\infty} d!\cdot\frac{\ell(M/\m^nM)}{n^d},
\]
where $d=\dim(M)$ and $\ell(-)$ represents the length of the $R$-module. See \cite[Section 4.6]{BH} for details.


Assume $(R,\m,k)$ is a Cohen--Macaulay local ring; that is,
the length of a maximal $R$-regular sequence in $\m$ is equal to $\dim(R)$.  In \cite{Abhyankar}, Abhyankar proved that the Hilbert--Samuel  multiplicity of $R$
satisfies the inequality
\[
{\rm e}(R)\geq \embdim(R)-\dim(R)+1.
\]
$R$ is said to have \emph{minimal multiplicity} if equality holds
in the above inequality.
If the residue field $k$ is infinite, then $R$ has minimal multiplicity if and
only if
$
\m^2=Q\m
$
for some minimal reduction $Q$ of $\m$; see 
\cite[Exercise 4.6.14]{BH}.
\end{chunk}

\begin{chunk}\label{dominant}
\textbf{(Uniform) dominant local ring.}
Let $(R,\m,k)$ be a local ring. $R$ is called \emph{dominant} if
\[
k\in \thick_{\D_{\sg}(R)}(X)
\]
for every nonzero complex $X\in\D_{\sg}(R)$. See details in \cite{Takahashi:2023}. Recall that $k\in\thick_{\D_{\sg}(R)}(X)$ if and only if
$k\in\thick_{\D^f_b(R)}(X\oplus R)$.

Note that every object in $\D_{\sg}(R)$ is isomorphic to $\Sigma^nM$ for some finitely generated $R$-module $M$ and some $n\in\mathbb Z$. Therefore, the local ring $(R,\mathfrak m,k)$ is dominant if and only if
$k\in \thick_{\D_{\sg}(R)}(M)
$
for every nonzero finitely generated $R$-module $M$ of infinite projective dimension.

In \cite[Definition 5.3]{Takahashi:2026}, Takahashi introduced the \emph{dominant index} of $R$, defined by
\[
{\rm dx}(R)\colonequals \inf\{ n\in\mathbb{Z}_{\geq -1}\mid 
k\in\thick^{n+1}_{\D_{\sg}(R)}(X) \text{ for every }0\neq X\in\D_{\sg}(R)\}.
\]
A local ring $R$ is said to be \emph{uniformly dominant} if ${\rm dx}(R)<\infty$. Clearly, every uniformly dominant local ring is dominant.

\end{chunk}
\begin{example}\label{example}
    By \cite[Proposition 5.10]{Takahashi:2023}, known examples of dominant local rings include hypersurfaces, Cohen--Macaulay local rings with infinite residue fields and minimal multiplicity, local rings with quasi-decomposable maximal ideals \cite{NT}, and Burch rings \cite{DKT2020}.
\end{example}

\section{Descent of dominance}
The main result is \Cref{T1} from the introduction; see \Cref{extension}. It shows that the following question of Takahashi has an affirmative answer.
\begin{chunk}\label{flathom}
   In \cite[Question 7.5]{Takahashi:2023},  Takahashi asked:
   \begin{Question}
       Let $(R,\m)\rightarrow (S,\n)$ be a flat local homomorphism. If $S$ is dominant, is $R$ dominant?
   \end{Question}
\end{chunk}

\begin{chunk}\label{Letz}
     Let $(R,\m)\rightarrow (S,\n)$ be a flat local homomorphism. Let $X,G$ be objects in $\D^f_b(R)$. For each $n\geq 0$, Letz \cite[Corollary 2.11]{Letz} observed that
     $$
     X\in \thick^n_{\D^f_b(R)}(G)\iff S\otimes_R X\in \thick^n_{\D^f_b(S)}(S\otimes_R G).
     $$
     In particular, $X\in\thick_{\D^f_b(R)}(G)$ if and only if $S\otimes_R X\in \thick_{\D^f_b(S)}(S\otimes_R G)$. Taking $G=R$, we obtain
     $$
     X=0 \text{ in }\D_{\sg}(R) \iff S\otimes_R X =0\text{ in } \D_{\sg}(S).
     $$
\end{chunk}
For a commutative noetherian ring $A$, recall that a complex $J$ of injective $A$-modules is called \emph{$K$-injective} if the Hom complex
$
\Hom_A(E,J)
$
is acyclic for every acyclic complex $E$ of $A$-modules. 

\begin{chunk}\label{recollement}
Let $A$ be a commutative noetherian ring. By \cite[Corollary 4.3]{Krause2005}, there is a recollement
\[
\begin{tikzcd}[column sep=large, row sep=large]
\K_{\ac}(\Inj A)
  \arrow[r, "I_A"]
&
\K(\Inj A)
  \arrow[l, shift left=2.2ex, "I_{A,\rho}"]
  \arrow[l, shift right=2.2ex, "I_{A, \lambda}"']
  \arrow[r, "Q_A"]
&
\D(A)
  \arrow[l, shift left=2.2ex, "Q_{A,\rho}"]
  \arrow[l, shift right=2.2ex, "Q_{A,\lambda}"']
\end{tikzcd}
\]
in the sense of \cite{BBD}, where $I_A$ and $Q_A$ are canonical functors. Note that the right adjoint $Q_{A,\rho}$ of $Q_A$ is induced by taking $K$-injective resolutions. 
Let 
$$
\sigma_A=I_{A,\lambda} Q_{A,\rho}\colon \D(A)\rightarrow \K_{\ac}(\Inj A)
$$
 denote the \emph{stable localization} functor. This is introduced in 
 \cite[Section 5]{Krause2005}. By \cite[Corollary 5.5]{Krause2005}, the composition of the following functors
 $$
 \Mod(A)\to \D(A)\xrightarrow{\sigma_A} \K_{\ac}(\Inj A)
 $$
 preserves all coproducts and $\sigma_A(A)=0$. Moreover, $\sigma_A$ induces a fully faithful functor
 $$
 \overline{\sigma_A}\colon \D_{\sg}(A)\rightarrow \K_{\ac}(\Inj A)^c
 $$
 which is an equivalence up to direct summands; see \cite[Corollary 5.4]{Krause2005}. 

 For each $X\in\K(\Inj A)$, there exists an exact triangle
 \begin{equation}\label{decomposition}
      Q_{A,\lambda}Q_A X\to X\to I_AI_{A,\lambda}X\to \Sigma Q_{A,\lambda}Q_A X;
 \end{equation}
 see \cite[page 1135]{Krause2005}. 
 Let \(\mathcal L_A\) denote the left orthogonal of \(\K_{\ac}(\Inj A)\) in
\(\mathrm K(\operatorname{Inj}A)\). Namely, for each object $X\in\K(\Inj A)$, it is in $\mathcal L_A$ if and only if 
 $\Hom_{\K(\Inj A)}(X,Y)=0$ for all $Y\in \K_{\ac}(\Inj A)$. 
Note that $Q_{A,\lambda}Q_A X\in \mathcal L_A$ and $I_AI_{A,\lambda}X\in\K_{\ac}(\Inj A)$. Moreover, any exact triangle in $\K(\Inj A)$
$$
X_1\to X\to X_2\to \Sigma X_1
$$
with $X_1\in \mathcal L_A$ and $X_2\in \K_{\ac}(\Inj A)$ is isomorphic to the exact triangle (\ref{decomposition}).
\end{chunk}
\begin{lemma}\label{restric-fun}
Let \(R\to S\) be a flat homomorphism. The restriction of scalars induces a coproduct-preserving triangle functor
\[
U\colon\K_{\ac}(\Inj S)\longrightarrow\K_{\ac}(\Inj R).
\]
Moreover, restriction of every \(K\)-injective complex of \(S\)-modules is \(K\)-injective over \(R\).
\end{lemma}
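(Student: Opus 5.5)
The plan is to reduce everything to the Hom--tensor adjunction $\Hom_R(M,N)\cong\Hom_S(S\otimes_R M,N)$ for an $R$-module $M$ and an $S$-module $N$, combined with flatness of $S$ over $R$.

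\emph{Injectives restrict to injectives.} Let $I$ be an injective $S$-module. The functor $\Hom_R(-,I)\cong\Hom_S(S\otimes_R -,I)$ is the composite of the exact functor $S\otimes_R-$ (exact since $S$ is flat) with the exact functor $\Hom_S(-,I)$. Hence it is exact, and the restriction of $I$ is an injective $R$-module.

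\emph{The functor $U$.} Applying restriction degreewise sends a complex of injective $S$-modules to a complex of injective $R$-modules. It does not change the underlying abelian groups or the differentials, so homology is unchanged and acyclic complexes stay acyclic. Restriction is additive, so it sends $S$-linear null-homotopies to $R$-linear ones and therefore descends to the homotopy categories. It commutes with shifts and with mapping cones, which are both computed degreewise, so $U$ is a triangle functor.

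\emph{Coproducts.} Coproducts in $\K_{\ac}(\Inj S)$ need care, since an infinite direct sum of injectives is injective only because $S$ is noetherian. Over a noetherian ring, the coproduct in $\K(\Inj S)$ is the degreewise direct sum, because that sum is again a complex of injectives. An acyclic family has acyclic degreewise sum, so this is also the coproduct in $\K_{\ac}(\Inj S)$, which is localizing by \ref{Kac}. Restriction commutes with direct sums of modules, and $R$ is noetherian as well. So $U$ sends the degreewise sum over $S$ to the degreewise sum over $R$, which is the coproduct in $\K_{\ac}(\Inj R)$. I expect this step to be the only genuine subtlety: one has to identify the coproducts correctly on both sides.

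\emph{$K$-injectivity.} Let $J$ be a $K$-injective complex of $S$-modules and $E$ an acyclic complex of $R$-modules. The adjunction extends to an isomorphism of Hom complexes
\[
\Hom_R(E,J)\cong\Hom_S(S\otimes_R E,J),
\]
since the Hom complex is built degreewise from module Homs and the adjunction is natural. By flatness, $S\otimes_R E$ is an acyclic complex of $S$-modules. The right-hand side is therefore acyclic by $K$-injectivity of $J$. Finally, $J$ consists of injective $R$-modules by the first step, so the restriction of $J$ is a $K$-injective complex of injective $R$-modules.
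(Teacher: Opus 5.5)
Your proposal is correct and follows essentially the same route as the paper. Injectivity of restricted injectives comes from the adjunction plus flatness, coproducts are termwise by noetherianity, and $K$-injectivity follows from the Hom-complex isomorphism $\Hom_R(E,J)\cong\Hom_S(S\otimes_R E,J)$ together with acyclicity of $S\otimes_R E$; your extra details (the triangle structure, identifying coproducts in $\K_{\ac}(\Inj S)$, and noting that the restricted complex consists of injective $R$-modules as the paper's definition of $K$-injective requires) only fill in points the paper treats briefly.
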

\begin{proof}
Assume \(J\) is an injective \(S\)-module. For each \(R\)-module \(N\), there is a natural isomorphism
\[
\operatorname{Hom}_R(N,J)
\cong
\operatorname{Hom}_S(S\otimes_RN,J).
\]
Since $S$ is flat over $R$ and $J$ is injective over $S$, the functor $\Hom_R(-,J)$ is exact, and hence \(J\) is injective as an \(R\)-module.
Since the restriction functor is exact, it takes acyclic complexes to acyclic complexes and defines the desired functor. Note that direct sums of injective modules are injective over noetherian rings. Coproducts in the relevant homotopy categories are therefore represented termwise (see \cite[Lemma 1.1]{BN}), and restriction preserves them.

Next, we prove the second statement. Let \(J\) be a \(K\)-injective complex over \(S\), and let \(E\) be an acyclic complex of \(R\)-modules. There is a natural isomorphism of Hom complexes
\[
\operatorname{Hom}_R(E,J)
\cong
\operatorname{Hom}_S(S\otimes_RE,J).
\]
Flatness makes \(S\otimes_RE\) acyclic.  Since \(J\) is \(K\)-injective over $S$, the complex on the right is acyclic. Thus, \(J\) is \(K\)-injective over \(R\).
\end{proof}

\begin{remark}
 Keep the assumption of \Cref{restric-fun}. Let $X$ be a complex of $S$-modules. We still denote it by $X$ when viewing it as a complex of $R$-modules. For $X\in \K_{\ac}(\Inj S)$, we use $U(X)$ to denote the same complex regarded as a complex of $R$-modules, which belongs to $\K_{\ac}(\Inj R)$. This notation is introduced only to make the proof of \Cref{extension} clearer and should not cause any confusion.
\end{remark}

\begin{proposition}\label{res-f}
Let \(R\to S\) be a flat homomorphism and suppose that
$\operatorname{pd}_R S<\infty. 
$
Then

(1) For each $X\in\D(S)$,  
there is an isomorphism
$
U\sigma_S(X)\cong\sigma_R(X)
$
in $\K_{\ac}(\Inj R)$. 

(2)
If
$
(R,\mathfrak m,k)\longrightarrow(S,\mathfrak n,l)
$
is local, then 
$
\sigma_R(k)
$
is a direct summand of $U\sigma_S(l)$ 
in \(\K_{\ac}(\Inj R)\).
\end{proposition}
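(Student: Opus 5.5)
\emph{Part (1).} I would compare the defining triangles (\ref{decomposition}) over $S$ and over $R$. Let $J=Q_{S,\rho}(X)$ be a $K$-injective resolution of $X$ over $S$. The triangle (\ref{decomposition}) for $J$ reads
\[
Q_{S,\lambda}Q_SJ\to J\to \sigma_S(X)\to \Sigma Q_{S,\lambda}Q_SJ,
\]
since $I_SI_{S,\lambda}J=\sigma_S(X)$ up to the inclusion $I_S$. Restriction of scalars is an exact functor on homotopy categories of injectives, by \Cref{restric-fun}. Applying it gives a triangle $UQ_{S,\lambda}Q_SJ\to J\to U\sigma_S(X)\to$ in $\K(\Inj R)$, with the following properties.
\begin{itemize}
\item By the second half of \Cref{restric-fun}, $J$ is still $K$-injective over $R$ and is quasi-isomorphic to $X$. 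Hence it represents $Q_{R,\rho}(X)$.
\item The third term lies in $\K_{\ac}(\Inj R)$.
\end{itemize}
By the uniqueness statement at the end of \ref{recollement}, the restricted triangle is isomorphic to (\ref{decomposition}) for $Q_{R,\rho}(X)$, and hence $U\sigma_S(X)\cong \sigma_R(X)$, once the first term lies in $\mathcal L_R$. So everything reduces to showing that restriction maps $\mathcal L_S$ into $\mathcal L_R$.

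For this I would use that $\mathcal L_S$ is the essential image of $Q_{S,\lambda}$. Since $\D(S)=\operatorname{Loc}(S)$ and $Q_{S,\lambda}$ is a coproduct-preserving triangle functor, $\mathcal L_S=\operatorname{Loc}_{\K(\Inj S)}(\mathrm iS)$, where $\mathrm iS$ is an injective resolution of $S$. Restriction preserves triangles and coproducts, and $\mathcal L_R$ is localizing. It therefore suffices to check that the restriction of $\mathrm iS$ lies in $\mathcal L_R$. That restriction is a bounded-below injective resolution of the $R$-module $S$.

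\emph{Main obstacle.} The crux is to show $\Hom_{\K(\Inj R)}(\mathrm iS,Y)=0$ for every $Y\in\K_{\ac}(\Inj R)$, and this is where $\operatorname{pd}_RS<\infty$ enters. Choose a bounded resolution $P\to S$ by projective $R$-modules, and let $C$ be the cone of the composite $P\to S\to \mathrm iS$. Then $C$ is acyclic and bounded on the side for which every complex of injectives $Y$ is Hom-orthogonal to it, so $\Hom_{\K(R)}(C,Y)=0$. Hence $\Hom(\mathrm iS,Y)\cong \Hom_{\K(R)}(P,Y)$. This vanishes because $P$ is a bounded complex of projectives and $Y$ is acyclic. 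I expect the bookkeeping of boundedness directions in this step to be the main point needing care.

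\emph{Part (2).} Apply (1) to $X=l$. As an $R$-module, $l$ is annihilated by $\m$, so it is a $k$-vector space, and $l\cong k^{(I)}$ for some nonempty index set $I$. By \ref{recollement}, the composite $\Mod(R)\to\D(R)\xrightarrow{\sigma_R}\K_{\ac}(\Inj R)$ preserves coproducts. Therefore
\[
U\sigma_S(l)\cong\sigma_R(l)\cong\textstyle\bigoplus_{I}\sigma_R(k),
\]
which contains $\sigma_R(k)$ as a direct summand.
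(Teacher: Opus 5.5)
Your argument is correct, but the key step takes a different route from the paper. The outer frame of (1) is exactly the paper's: restrict the decomposition triangle of $Q_{S,\rho}(X)$, note that $Q_{S,\rho}(X)$ stays $K$-injective over $R$, and invoke uniqueness of the decomposition. Part (2) is essentially the paper's too; the paper only uses that $k$ is an $R$-direct summand of $l$ and that $\sigma_R$ is additive, so coproduct preservation is not needed there.

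The real difference is how you show that restriction carries $\mathcal L_S$ into $\mathcal L_R$.

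\textbf{The paper's route.} It uses the coinduction adjunction. Finite $\pd_RS$ and dimension shifting give $\Ext^1_R(S,K_i)=0$, so $\Hom_R(S,E)$ is an acyclic complex of injective $S$-modules for every $E\in\K_{\ac}(\Inj R)$. Hence $\Hom_{\K(\Inj R)}(L,E)\cong\Hom_{\K(\Inj S)}(L,\Hom_R(S,E))=0$ for all $L\in\mathcal L_S$. This works directly from the definition of $\mathcal L_S$ as an orthogonal and needs no description of its objects.

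\textbf{Your route.} You identify $\mathcal L_S$ with the essential image of $Q_{S,\lambda}$, hence with $\operatorname{Loc}(\mathrm iS)$, and then test a single generator using a bounded projective resolution of $S$ over $R$.
\begin{itemize}
\item The one input you assert without proof is $Q_{S,\lambda}(S)\cong\mathrm iS$. It is in Krause's paper. It also follows from the lemma you already use, namely that maps from a cohomologically bounded-below acyclic complex into a complex of injectives are null-homotopic. That lemma gives $\Hom_{\K(\Inj S)}(\mathrm iS,Y)\cong\Hom_{\K(S)}(S,Y)\cong\Hom_{\D(S)}(S,Q_SY)$ naturally in $Y$, and Yoneda finishes.
\item Your degree bookkeeping is right. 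In the paper's homological grading, $C$ vanishes in degrees above $\pd_RS+1$, and likewise $\Sigma^{-1}C$, which is the correct side.
\end{itemize}

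\textbf{What your route buys.} It makes the role of the hypothesis visible. The restriction of $\mathrm i_SS$ is $\mathrm i_RS\cong Q_{R,\rho}(S)$, and this lies in $\mathcal L_R$ exactly when $\sigma_R(S)=0$. So part (1) really only needs $\sigma_R(S)=0$. The paper's working condition, that $\Hom_R(S,-)$ preserves acyclicity of complexes of injectives, is in fact equivalent, since $\Hom_R(\mathrm iS,E)\to\Hom_R(S,E)$ is a quasi-isomorphism for such $E$. This equivalence is less apparent in the paper's formulation.
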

\begin{proof}
(1) Let \(E\) be an acyclic complex of injective \(R\)-modules. Since \[
\operatorname{Hom}_S(-,\operatorname{Hom}_R(S,E_i))
\cong
\operatorname{Hom}_R(-,E_i)
\]
for each $i\in\mathbb Z$,
we get that the complex
$
\operatorname{Hom}_R(S,E)
$
is a complex of injective modules over \(S\).
Let $K_i$ denote the kernel of the map $\del_i^E\colon E_i\to E_{i-1}$. Since $E$ is acyclic, there is a short exact sequence
\[
0\longrightarrow K_{i+1}\longrightarrow E_{i+1}
\longrightarrow K_i\longrightarrow 0
\]
for each $i\in\mathbb Z$. As each $E_{i+1}$ is injective,
dimension shifting yields
\[
\Ext_R^1(S,K_i)\cong\Ext_R^{n+1}(S,K_{i+n})
\]
for each $n\geq0$. Since $\pd_R S<\infty$, we obtain
$\Ext_R^1(S,K_i)=0$ for  each $i\in\mathbb Z$.
Consequently, $\Hom_R(S,E)$ is an acyclic complex of
injective $S$-modules.

 Next, keep the notation as in \Cref{recollement}. If \(L\in\mathcal L_S\) and
\(E\in\K_{\ac}(\Inj R)\), there is an adjunction isomorphism
\[
\operatorname{Hom}_{\mathrm K(\operatorname{Inj}R)}(L,E)
\cong
\operatorname{Hom}_{\mathrm K(\operatorname{Inj}S)}
\bigl(L,\operatorname{Hom}_R(S,E)\bigr).
\]
This is zero as we have $\Hom_R(S,E)\in \K_{\ac}(\Inj S)$. 
Thus, restriction of scalars sends every object of
$\mathcal L_S$ to an object of $\mathcal L_R$.

For each \(X\in\mathrm D(S)\), we have $Q_{S,\rho}(X)\in \K(\Inj S)$,  and then the recollement in \Cref{recollement} induces an exact triangle
\[
L_X\longrightarrow Q_{S,\rho}(X)
\longrightarrow I_S I_{S,\lambda} Q_{S,\rho}(X)
\longrightarrow\Sigma L_X
\]
in $\K(\Inj S)$,
where \(L_X\in\mathcal L_S\); see \Cref{recollement}. Note that $I_S I_{S,\lambda} Q_{S,\rho}(X)=\sigma_S(X)$ is acyclic. After restriction, the first term is in \(\mathcal L_R\), the middle term is in $\K(\Inj R)$ by the proof of \Cref{restric-fun}, and the third one is in $\K_{\ac}(\Inj R)$ by \Cref{restric-fun}. The uniqueness of the corresponding orthogonal decomposition in \Cref{recollement} implies
\[
U\sigma_S(X)\cong I_{R,\lambda}Q_{S,\rho}X.
\]
Since $Q_{S,\rho}X$ is $K$-injective over $S$ (see \Cref{recollement}), it follows from \Cref{restric-fun} that it is also $K$-injective over $R$ . Thus, $Q_{S,\rho}X\cong Q_{R,\rho}X$  in $\K(\Inj R)$, and hence we conclude that $U\sigma_S(X)\cong \sigma_R(X)$.

(2) The $R$-module $l$ is also a $k$-vector space. Thus, $k$ is a direct summand of $l$ over $k$.  This also yields that $k$ is a direct summand of $l$ as $R$-modules. Applying the functor \(\sigma_R\), we get that the object $\sigma_R(k)$ is a direct summand of $\sigma_R(l)$. 
Combining this with $\sigma_R(l)\cong U\sigma_S(l),
$
we conclude that $\sigma_R(k)$ is a direct summand of $U\sigma_S(l)$ in $\K_{\ac}(\Inj R)$. 
\end{proof}

\begin{proposition}\label{res:contain}
Let \(R\to S\) be a flat homomorphism with \(\operatorname{pd}_RS<\infty\), and let \(M\) be a finitely generated \(R\)-module. Then
$
\sigma_R(S\otimes_RM)
\in
\operatorname{Loc}_{\K_{\ac}(\Inj R)}(\sigma_R(M)) 
$ and we have
\[
U\sigma_S(S\otimes_RM)
\in
\operatorname{Loc}_{\K_{\ac}(\Inj R)}(\sigma_R(M)).
\]
\end{proposition}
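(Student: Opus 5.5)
The second assertion follows from the first once we apply \Cref{res-f}(1) with $X=S\otimes_RM\in\D(S)$: this gives $U\sigma_S(S\otimes_RM)\cong\sigma_R(S\otimes_RM)$ in $\K_{\ac}(\Inj R)$. So everything reduces to proving
\[
\sigma_R(S\otimes_RM)\in\operatorname{Loc}_{\K_{\ac}(\Inj R)}(\sigma_R(M)),
\]
where $S\otimes_RM$ is now viewed as an $R$-module (in general not finitely generated).

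The plan is to write the flat $R$-module $S$ as a filtered colimit of finitely generated free modules (Lazard), and to control colimits via coproducts. Choose a directed system of finitely generated free modules $F_\alpha$ with $\varinjlim F_\alpha\cong S$. Tensoring gives $S\otimes_RM\cong\varinjlim(F_\alpha\otimes_RM)$, and each $F_\alpha\otimes_RM\cong M^{n_\alpha}$. Hence $\sigma_R(F_\alpha\otimes_RM)\cong\sigma_R(M)^{n_\alpha}$ lies in the localizing subcategory $\mathcal C:=\operatorname{Loc}_{\K_{\ac}(\Inj R)}(\sigma_R(M))$.

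To pass to the colimit, I would reduce to a countable directed system, or better, avoid cardinality issues altogether. A filtered colimit of modules sits in the standard exact sequence computing it as the cokernel of a map between coproducts. For a countable (telescope) colimit this is the short exact sequence
\[
0\to\bigoplus_n N_n\xrightarrow{1-\text{shift}}\bigoplus_n N_n\to\varinjlim N_n\to0.
\]
For a general directed system one uses the bar (simplicial) resolution
\[
\cdots\to\bigoplus_{\alpha_0\le\alpha_1}N_{\alpha_0}\to\bigoplus_\alpha N_\alpha\to\varinjlim N_\alpha\to0,
\]
which is exact since filtered colimits are exact. The composite $\Mod(R)\to\K_{\ac}(\Inj R)$ induced by $\sigma_R$ preserves coproducts (\Cref{recollement}) and sends short exact sequences to triangles, because $\sigma_R$ is a triangle functor on $\D(R)$. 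Therefore in the countable case $\sigma_R(S\otimes_RM)$ is the cone of a map between coproducts of objects of $\mathcal C$, and so lies in $\mathcal C$.

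The main obstacle is that $S$ need not be countably presented, so the resolution is unbounded and we must realize $\sigma_R$ of $\varinjlim$ from an infinite resolution. I would handle this in one of two ways.

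\begin{enumerate}[(a)]
\item Work in $\D(R)$. Show that $\varinjlim N_\alpha$ lies in the localizing subcategory of $\D(R)$ generated by the $N_\alpha$, using the homotopy colimit of the truncations of the bar complex: it is a countable sequence of cones, followed by a telescope. Then apply the coproduct-preserving triangle functor $\sigma_R$.
\item Avoid the general case by a different decomposition. Write $S\otimes_RM$ as a directed union of countably generated submodules that are countable colimits of the $M^{n}$, and use transfinite induction on a filtration whose subquotients are such colimits.
\end{enumerate}

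I would try (a) first. Its subtle point is that $\sigma_R$ preserves coproducts on all of $\D(R)$. This should follow because $Q_{R,\rho}$ preserves coproducts for noetherian $R$, since $\D(R)$ is compactly generated by the compact object $R$ and injectives are closed under sums; moreover $I_{R,\lambda}$ is a left adjoint.
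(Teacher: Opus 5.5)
\textbf{There is a genuine gap, and it is at the step you yourself call subtle.} The functor $\sigma_R$ does \emph{not} preserve coproducts on $\D(R)$. Suppose it did. Then $\{X\in\D(R)\mid \sigma_R(X)=0\}$ would be a localizing subcategory containing $R$, since $\sigma_R(R)=0$. It would therefore be all of $\D(R)$. That is impossible unless $\D_{\sg}(R)=0$, because $\overline{\sigma_R}$ is fully faithful on $\D_{\sg}(R)$.

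Your justification breaks at $Q_{R,\rho}$. By Neeman's criterion, $Q_{R,\rho}$ preserves coproducts if and only if $Q_R$ preserves compact objects, that is, if and only if $\D^f_b(R)=\per(R)$. Here is a concrete example. Take $R=k[x]/(x^2)$. Then $\bigoplus_{n\ge 0}\Sigma^n\mathrm{i}k$ is not $K$-injective:
\begin{itemize}
\item applying $\Hom_{\K(\Inj R)}(\mathrm{i}k,-)$ to it gives $\bigoplus_{n\ge0}k$;
\item applying the same functor to $Q_{R,\rho}(\bigoplus_{n\ge0}\Sigma^n k)$ gives $\Hom_{\D(R)}(k,\prod_{n\ge0}\Sigma^nk)=\prod_{n\ge0}k$.
\end{itemize}

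Consequently, approach (a) fails. Knowing that $\varinjlim N_\alpha\in\operatorname{Loc}_{\D(R)}(\{N_\alpha\})$ gives no information about $\sigma_R(\varinjlim N_\alpha)$. The only coproducts $\sigma_R$ is known to respect are coproducts of modules (see \Cref{recollement}), or more generally of uniformly bounded complexes. The telescope of the truncations $B_{\le p}$ of the bar complex is not of that form. Each $B_{\le p}$ has homology $\varinjlim N_\alpha$ in degree $0$ and a syzygy in degree $p$, which is exactly the pattern of the example above.

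Route (b) meets the same obstruction. Transfinite induction needs a limit step, and at limit ordinals of uncountable cofinality the union is again an uncountable filtered colimit. In addition, the existence of the required filtration is not justified.

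\textbf{What is correct, and the missing idea.} Your reduction of the second assertion to the first via \Cref{res-f}(1) is fine. Your countable case is also correct, and it is precisely the case $d\le 1$ of an argument that works in general. The missing ingredient is the hypothesis $\pd_RS<\infty$, which you never use for the first assertion.
\begin{itemize}
\item Choose a projective resolution $0\to P_d\to\cdots\to P_0\to S\to 0$. Its syzygies are flat, so it stays exact after applying $-\otimes_RM$.
\item Each $P_i\otimes_RM$ is a direct summand of some $M^{(I_i)}$. Hence $\sigma_R(P_i\otimes_RM)\in\operatorname{Loc}_{\K_{\ac}(\Inj R)}(\sigma_R(M))$, using only coproducts of modules.
\item $\sigma_R(S\otimes_RM)$ is then built from these objects by finitely many cones.
\end{itemize}
This is the paper's argument. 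It replaces your infinite bar resolution by a finite one and avoids homotopy colimits entirely.

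Rescuing the Lazard approach for arbitrary flat $S$ would need a genuinely additional argument. For example, one could form the telescope in $\K(\Inj R)$ rather than in $\D(R)$, and test it against the compact generators $\mathrm{i}(D)$, $D\in\D^f_b(R)$, using purity of the bar resolution.
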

\begin{proof}
Put \(d=\operatorname{pd}_RS\), and choose a projective resolution
\[
0\longrightarrow P_d\longrightarrow\cdots
\longrightarrow P_0\longrightarrow S\longrightarrow0.
\]
Because \(S\) is flat, every syzygy in this resolution is flat.  Thus, tensoring this resolution with $M$ yields an exact sequence
\begin{equation}\label{equation}
    0\longrightarrow P_d\otimes_RM\longrightarrow\cdots
\longrightarrow P_0\otimes_RM
\longrightarrow S\otimes_RM\longrightarrow0.
\end{equation}

For every set \(I\), it follows from \Cref{recollement} that
$
\sigma_R(M^{(I)})\cong\sigma_R(M)^{(I)}.
$
Since each \(P_i\) is a direct summand of a free module \(R^{(I_i)}\), the module \(P_i\otimes_RM\) is a direct summand of \(M^{(I_i)}\). Combining with $
\sigma_R(M^{(I)})\cong\sigma_R(M)^{(I)}
$, we get
\begin{equation}\label{123}
   \sigma_R(P_i\otimes_RM)
\in
\operatorname{Loc}_{\K_{\ac}(\Inj R)}(\sigma_R(M))
\end{equation}
for each $0\leq i\leq d$. 
By (\ref{equation}), $S\otimes_R M\in \thick_{\D(R)}(\oplus_{i=0}^d P_i\otimes _R M)$, and hence 
\begin{equation}\label{234}
    \sigma_R(S\otimes_R M)
\in \thick_{\K_{\ac}(\Inj R)}(\oplus_{i=0}^d \sigma_R(P_i\otimes _R M));
\end{equation}
see \cite[Lemma 2.4(6)]{ABIM}.
By (\ref{123}) and (\ref{234}), we have 
\[
\sigma_R(S\otimes_RM)
\in
\operatorname{Loc}_{\K_{\ac}(\Inj R)}(\sigma_R(M)).
\]
The second statement follows by combining this with \Cref{res-f}.
\end{proof}

\begin{theorem}\label{extension}
Let
$
(R,\mathfrak m,k)\longrightarrow(S,\mathfrak n,l)
$
be a flat local homomorphism.
If \(S\) is dominant, then \(R\) is dominant. 
\end{theorem}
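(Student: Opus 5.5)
The plan is to compare generation in $\D_{\sg}(R)$ with generation in $\K_{\ac}(\Inj R)$ via the stable localization functor $\sigma_R$, and to transport the dominance of $S$ down to $R$ along the restriction functor $U$ of \Cref{restric-fun}. First I will check that \Cref{res-f} and \Cref{res:contain} apply, i.e.\ that $\pd_R S<\infty$. Since $S$ is a flat $R$-module and $R$ is a noetherian local ring, hence of finite Krull dimension, the theorem of Gruson--Raynaud and Jensen gives $\pd_R S\le \dim R<\infty$.

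Now let $M$ be a nonzero finitely generated $R$-module of infinite projective dimension; by \Cref{dominant} it suffices to show $k\in\thick_{\D_{\sg}(R)}(M)$. By \Cref{Letz}, $S\otimes_R M\neq 0$ in $\D_{\sg}(S)$. Since $S$ is dominant, $l\in\thick_{\D_{\sg}(S)}(S\otimes_RM)$. Applying the triangle functor $\overline{\sigma_S}$ gives $\sigma_S(l)\in\thick_{\K_{\ac}(\Inj S)}(\sigma_S(S\otimes_RM))$. Applying the triangle functor $U$ then gives
\[
U\sigma_S(l)\in\thick_{\K_{\ac}(\Inj R)}\bigl(U\sigma_S(S\otimes_RM)\bigr)\subseteq \operatorname{Loc}_{\K_{\ac}(\Inj R)}(\sigma_R(M)),
\]
where the inclusion is \Cref{res:contain}. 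By \Cref{res-f}(2), $\sigma_R(k)$ is a direct summand of $U\sigma_S(l)$. Since localizing subcategories are thick, we get $\sigma_R(k)\in\operatorname{Loc}(\sigma_R(M))$.

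The key step, and the one I expect to require the most care, is to pass from $\operatorname{Loc}$ back to $\thick$. Both $\sigma_R(k)$ and $\sigma_R(M)$ are compact in $\K_{\ac}(\Inj R)$, being images under $\overline{\sigma_R}$. Set $\mathcal{T}=\operatorname{Loc}_{\K_{\ac}(\Inj R)}(\sigma_R(M))$. This category has coproducts computed in $\K_{\ac}(\Inj R)$, so $\sigma_R(M)$ is compact in $\mathcal T$ and $\mathcal T$ is compactly generated by it. Moreover $\sigma_R(k)$ is compact in $\mathcal T$. By Neeman's lemma cited in \Cref{compactly gen}, $\sigma_R(k)\in\mathcal T^c=\thick_{\mathcal T}(\sigma_R(M))=\thick_{\K_{\ac}(\Inj R)}(\sigma_R(M))$.

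Finally I will descend back to $\D_{\sg}(R)$ using that $\overline{\sigma_R}$ is fully faithful and dense up to summands (\Cref{recollement}). The essential image of $\thick_{\D_{\sg}(R)}(M)$ is closed under shifts and cones, and its closure under summands is $\thick_{\K_{\ac}(\Inj R)}(\sigma_R(M))$. Hence $\sigma_R(k)$ is a direct summand of $\sigma_R(Y)$ for some $Y\in\thick_{\D_{\sg}(R)}(M)$, so there are maps $\sigma_R(k)\to\sigma_R(Y)\to\sigma_R(k)$ composing to the identity. By full faithfulness these come from maps $k\to Y\to k$ in $\D_{\sg}(R)$ composing to the identity. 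Thus $k$ is a retract of $Y$, hence a direct summand of $Y$ in the triangulated category $\D_{\sg}(R)$, and so $k\in\thick_{\D_{\sg}(R)}(M)$. This proves that $R$ is dominant.
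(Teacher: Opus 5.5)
Your proposal is correct and follows the paper's proof essentially step by step. The steps match: finiteness of $\pd_R S$, then \Cref{Letz} together with dominance of $S$, then applying $\overline{\sigma_S}$ and $U$, then \Cref{res:contain} and \Cref{res-f}(2), then Neeman's lemma to pass from $\operatorname{Loc}$ to $\thick$. The only differences are minor: you work directly in $\D_{\sg}(S)$ rather than in $\D^f_b(S)$ with $S$ adjoined, and you prove the final descent from $\thick_{\K_{\ac}(\Inj R)}(\sigma_R(M))$ to $\thick_{\D_{\sg}(R)}(M)$ yourself (via full faithfulness and the split-retract argument) instead of citing \cite[Proposition 3.15]{DLL}.
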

\begin{proof}
By \cite[Theorem 3.2.6]{RG} and \cite[Proposition 6]{Jensen}, every flat module has finite projective dimension over a commutative noetherian ring of finite Krull dimension; see also \cite[5.9]{DKLO}. Since $R$ is a noetherian local ring, it has finite Krull
dimension, and therefore $\pd_R(S)<\infty$. Hence, the assumptions in \Cref{res-f} and \Cref{res:contain} are satisfied.

Fix a finitely generated \(R\)-module \(M\) of infinite projective dimension. By \Cref{Letz}, we have
$
\operatorname{pd}_S(S\otimes_RM)=\infty.
$
Together with the dominance of $S$, this yields
\[
l\in
\operatorname{thick}_{\D^f_b(S)}
(S\oplus (S\otimes_RM)).
\]
Since $\sigma_S(S)=0$ (see \Cref{recollement}), we obtain
$
\sigma_S(l)\in
\operatorname{thick}_{\K_{\ac}(\Inj S)}
\bigl(\sigma_S(S\otimes_RM)\bigr).
$
After restriction, \Cref{res:contain} implies that
$
U\sigma_S(l)\in
\operatorname{Loc}_{\K_{\ac}(\Inj R)}(\sigma_R(M)).
$
By \Cref{res-f}, the compact object \(\sigma_R(k)\) is a direct summand of \(U\sigma_S(l)\). Therefore,
$$
\sigma_R(k) \in
\operatorname{Loc}_{\K_{\ac}(\Inj R)}(\sigma_R(M)).
$$

Note that the category \(\K_{\ac}(\Inj R)\) is compactly generated by \cite[Corollary 5.4]{Krause2005}, and the stabilizations of finitely generated modules are compact (see \Cref{recollement}). Hence, we obtain the first inclusion below:
\begin{align*}
   \sigma_R(k) &\in
\K_{\ac}(\Inj R)^c\cap \operatorname{Loc}_{\K_{\ac}(\Inj R)}
 (\sigma_R(M))\\
 & \subseteq \operatorname{Loc}_{\K_{\ac}(\Inj R)}
 (\sigma_R(M))^c\\
& =\thick_{\K_{\ac}(\Inj R)^c}(\sigma_R(M)), 
\end{align*}
where the equality follows from \Cref{compactly gen}, since $\operatorname{Loc}_{\K_{\ac}(\Inj R)}
 (\sigma_R(M))$ is compactly generated by $\sigma_R(M)$ and $\K_{\ac}(\Inj R)^c$ is a thick subcategory of $\K_{\ac}(\Inj R)$.

Combining this with the fully faithful functor $\D_{\sg}(R)\to \K_{\ac}(\Inj R)$ induced by $\sigma_R$, we conclude from \cite[Proposition 3.15]{DLL} that
\[
k\in
\operatorname{thick}_{\D_{\sg}(R)}(M).
\]
This proves that $R$ is dominant, since $M$ was chosen as an arbitrary finitely generated $R$-module of infinite projective dimension.
\end{proof}

\begin{remark}\label{completion}
       For a local ring $(R,\m)$, Takahashi \cite[Corollary 5.8]{Takahashi:2023} proved that $R$ is dominant if and only if $\widehat{R}$ is dominant. Note that the descent direction is also a direct consequence of \Cref{extension}. 
\end{remark}

The following result was proved by Takahashi \cite[Proposition 5.10]{Takahashi:2023} under the assumption that the residue field is infinite. By \Cref{extension}, this assumption can be removed.

\begin{corollary}\label{no-infinite}
Every Cohen--Macaulay local ring of minimal multiplicity is dominant.
\end{corollary}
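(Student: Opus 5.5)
The plan is to reduce to the infinite-residue-field case, which Takahashi settled in \cite[Proposition 5.10]{Takahashi:2023} (see \Cref{example}), by passing to a faithfully flat extension and then applying \Cref{extension}.

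Let $(R,\m,k)$ be a Cohen--Macaulay local ring of minimal multiplicity. If $k$ is infinite, \Cref{example} already gives the result. Otherwise, take the standard gonflement
\[
S\colonequals R[t]_{\m R[t]},
\]
where $t$ is an indeterminate. This is a noetherian local ring with maximal ideal $\n=\m S$ and residue field $k(t)$, which is infinite. Since $R[t]$ is free over $R$ and localization is flat, the map $R\to S$ is a flat local homomorphism. By \Cref{extension}, it then suffices to show that $S$ is a Cohen--Macaulay local ring of minimal multiplicity. Takahashi's result, applied to $S$, gives that $S$ is dominant, and so $R$ is dominant.

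The remaining work is checking that the invariants are preserved under $R\to S$. The closed fiber is $S/\m S=k(t)$, a field. Hence:
\begin{enumerate}[(i)]
\item $\dim S=\dim R+\dim(S/\m S)=\dim R$ by flatness.
\item $S$ is Cohen--Macaulay, since $R$ is and the closed fiber is Cohen--Macaulay.
\item $\n/\n^2\cong \m/\m^2\otimes_R S\cong (\m/\m^2)\otimes_k k(t)$, so $\embdim S=\embdim R$.
\item $\ell_S(S/\n^n)=\ell_S(S\otimes_R R/\m^n)=\ell_R(R/\m^n)$, because $\n=\m S$ and a flat local map with field closed fiber preserves lengths: a composition series of $R/\m^n$ with factors $k$ tensors to one with factors $k(t)$. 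Therefore the Hilbert--Samuel functions of $R$ and $S$ agree, and ${\rm e}(S)={\rm e}(R)$.
\end{enumerate}
It follows that ${\rm e}(S)=\embdim S-\dim S+1$, so $S$ has minimal multiplicity.

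I expect no serious obstacle. The only point requiring care is the length computation in (iv), and that is standard.
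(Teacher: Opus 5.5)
Your proposal is correct and follows the paper's proof: pass to $S=R[t]_{\m R[t]}$, check that dimension, Cohen--Macaulayness, embedding dimension and multiplicity are preserved, apply Takahashi's infinite-residue-field result to $S$, and descend via \Cref{extension}. The only cosmetic difference is that you compare Hilbert--Samuel functions through the lengths $\ell_S(S/\n^n)=\ell_R(R/\m^n)$, whereas the paper compares the graded pieces $(\m S)^n/(\m S)^{n+1}\cong(\m^n/\m^{n+1})\otimes_k k(t)$; the two computations are equivalent.
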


\begin{proof}
Let $(R,\mathfrak m,k)$ be a Cohen--Macaulay local ring of minimal
multiplicity, and set
$
S\colonequals R[t]_{\mathfrak mR[t]}.
$
Then $S$ is a local ring with maximal ideal $\mathfrak mS$, and its residue field
is the infinite field $k(t)$. Note that the natural map $R\to S$ is a faithfully flat local homomorphism. 
 By
\cite[Theorem A.11]{BH}, we have $\dim(R)=\dim(S)$. Moreover, $S$ is
Cohen--Macaulay by \cite[Theorem 2.1.7]{BH}.

For each $n\geq 0$, there are isomorphisms
\begin{align*}
(\mathfrak mS)^n/(\mathfrak mS)^{n+1}
&\cong \mathfrak m^nS/\mathfrak m^{n+1}S\\
&\cong (\mathfrak m^n/\mathfrak m^{n+1})\otimes_R S\\
&\cong (\mathfrak m^n/\mathfrak m^{n+1})\otimes_k k(t),
\end{align*}
where the second isomorphism follows from the flatness of $S$ over $R$.
It follows that
$
\embdim(S)=\embdim(R)
$
and
$
{\rm e}(S)={\rm e}(R).
$
Together with $\dim(R)=\dim(S)$ and the minimal multiplicity of $R$, these
equalities show that $S$ has minimal multiplicity. Hence, 
\cite[Proposition~5.10]{Takahashi:2023} yields that $S$ is dominant, and
therefore $R$ is dominant by \Cref{extension}.
\end{proof}

\begin{remark}
By \cite[Proposition~6.7(1d) and Theorem~6.14(2)]{KT2026},
a non-Gorenstein Cohen--Macaulay local ring $(R,\m)$ with infinite
residue field is uniformly dominant if either ${\rm e}(R)\leq 5$
or $\codim R\leq 2$.
The same argument as in \Cref{no-infinite} shows that $R$ is dominant in these
two cases without any restriction on the residue field.

Indeed, set
$S=R[t]_{\mathfrak mR[t]}$. Then $S$ is Cohen--Macaulay with infinite
residue field and has the same dimension, embedding dimension, and
multiplicity as $R$. Moreover, $S$ is non-Gorenstein, since
Gorensteinness descends along flat local homomorphisms; see \cite[Corollary 3.3.15]{BH}.
By the results cited above, $S$ is dominant.
Applying \Cref{extension}, we conclude that $R$ is dominant.
\end{remark}

\begin{ack}
  This work was carried out by the author with the assistance of Eureka, a multi-agent system developed by JIUCHONG at the University of Science and Technology of China for mathematical research. The author would like to thank the JIUCHONG team for providing access to Eureka. The author was supported by the National Natural Science Foundation of China (No. 12401046).
\end{ack}

\bibliographystyle{amsplain}
\bibliography{ref}
\end{document}